\declaretheoremstyle[notefont=\bfseries,notebraces={}{},%
    headpunct={},postheadspace=0.5em]{mystyle}
\newcommand\blfootnote[1]{%
  \begingroup
  \renewcommand\thefootnote{}\footnote{#1}%
  \addtocounter{footnote}{-1}%
  \endgroup
}
\declaretheorem[style=mystyle,numbered=no,name=Theorem]{theorem}
\declaretheorem[style=mystyle,numbered=no,name=Lemma]{lemma}
\renewenvironment{proof}{{\bf Proof:} }{\hfill\par\medskip}
\newcommand{\prt}[2]{\frac{\partial #1}{\partial #2} }
\newcommand{\de }{differential equation}
\begin{document}
\begin{center}
\section*{EXACT DIFFERENTIAL EQUATIONS AND HARMONIC FUNCTIONS}
{\bf Azubuike Henry C.*}, \ {\bf Dagiloke Joseph O.} \\
*Department of Mathematics,\\
*Federal University of Technology, Owerri, Nigeria\\
{*henry.azubuike@futo.edu.ng}
\end{center}
\begin{abstract}
In this work, we investigate some connections between exact \de s and harmonic functions and in particular, we obtain necessary and sufficient conditions for which exact equations admit harmonic solutions. As an application, we consider the orthogonal trajectories of harmonic functions, and among other results we obtain that the Cauchy-Riemann equations and the non-vanishing of the first partial derivatives are sufficient for any two curves to be orthogonal trajectories of each other. All curves throughout the work are restricted to the $xy-$plane.
\end{abstract}
\section*{1\quad Introduction}
Harmonic functions -- the solutions of Laplace's equation play a very crucial role in many areas of mathematics, physics and engineering [1]. \par\smallskip 
Differential equations on the other hand are indispensable tools in applied mathematics. In this work, we look at some connections between harmonic functions (and their conjugates) and exact \de s -- an idea that stemmed from the similarities in the process of finding harmonic conjugates and that of solving exact \de s. We ask the questions: Can harmonic functions be solutions of exact \de s? Can we classify exact \de s that admit harmonic solutions? Given a harmonic function, can we find the exact equation satisfied by its conjugate without knowing the conjugate itself? 
\par\smallskip 
The above questions are answered in this work, and most importantly a necessary and sufficient condition for an exact \de\ to admit a harmonic solution is proved. \par\smallskip  As an application, we consider the orthogonal trajectories of harmonic functions and among other results, we obtained that the orthogonal trajectories of harmonic functions are necessarily harmonic and furthermore, we also showed that harmonic functions and their conjugates are orthogonal trajectories of each other. \par\smallskip 
While the analyses carried out in this work is restricted to $\mathbb{R}^2$, the $xy-$plane, we may have been able to show that certain problems concerning harmonic functions can be posed in terms of exact \de s and importantly, the harmonic conjugates found by the existing methods may as well be a particular solution to the more general problem of finding the orthogonal trajectories of their harmonic functions, as the latter problem offers suggestions that certain arbitrary functions of harmonic functions are also harmonic.
\section*{2\quad Preliminaries}
Some of the existing results that are used in this work are stated in this section. The basic theory and proofs may be found in the references.\par\medskip 
We begin with an exact \de.
\begin{theorem}[2.1]{\bf [2]}\ \
{\it Let the functions $M,\ N,\ M_x$ and $N_y$, where the subscripts denote the partial derivatives, be continuous in a simply-connected region ${R}$, then the \de 
\[M(x,y)\mathrm{d}x + N(x,y)\mathrm{d}y = 0 \tag{2.1}\]
is an exact \de\ in ${R}$ if and only if \[M_y(x,y) = N_x(x,y)\tag{2.2}\]
at each point of ${R}$. That is, there exists a function $f$ satisfy \[ f_x(x,y) = M(x,y), \qquad f_y(x,y) = N(x,y)\]
if and only if $M$ and $N$ satisfy equation (2.2).}
\end{theorem}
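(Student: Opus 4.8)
The plan is to establish the two directions of the equivalence separately, treating the forward implication as an immediate consequence of the equality of mixed partials and reserving the genuine work for the converse.

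For the necessity (``only if''), I would assume that a function $f$ exists with $f_x = M$ and $f_y = N$ throughout $R$. Differentiating the first relation with respect to $y$ and the second with respect to $x$ gives $M_y = f_{xy}$ and $N_x = f_{yx}$. Since the hypotheses guarantee that the relevant first partials are continuous, the mixed second partials of $f$ are continuous, and Clairaut's theorem yields $f_{xy} = f_{yx}$. Combining these equalities gives $M_y = N_x$ at every point of $R$, which is exactly (2.2).

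For the sufficiency (``if''), I would assume $M_y = N_x$ on $R$ and construct $f$ explicitly. Fixing a base point $(x_0, y_0) \in R$, I would set
\[ f(x,y) = \int_{x_0}^{x} M(s,y)\,\mathrm{d}s + g(y), \]
so that $f_x = M$ holds automatically. It then remains to choose $g$ so that $f_y = N$. Differentiating under the integral sign and substituting $M_y = N_x$, the fundamental theorem of calculus collapses the integral of $N_x$ into $N(x,y) - N(x_0,y)$, reducing the requirement to $g'(y) = N(x_0,y)$; taking $g(y) = \int_{y_0}^{y} N(x_0,t)\,\mathrm{d}t$ then produces the desired $f$.

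The step I expect to be the main obstacle is justifying that this construction is valid on the \emph{whole} of $R$ rather than merely near the base point: the interchange of differentiation and integration, and more fundamentally the independence of the construction on the chosen path of integration, are precisely what the simple-connectedness of $R$ guarantees. Indeed, the condition $M_y = N_x$ says that the form $M\,\mathrm{d}x + N\,\mathrm{d}y$ is closed, and closedness alone yields exactness only because $R$ is simply connected (via Green's theorem, or equivalently the path-independence of the line integral $\int M\,\mathrm{d}x + N\,\mathrm{d}y$). Without this topological hypothesis the locally defined potential need not patch together into a single-valued $f$, so I would be careful to invoke simple-connectedness explicitly at this point.
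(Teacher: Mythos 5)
Your argument is correct and is the standard proof: the paper itself gives no proof of Theorem 2.1, stating it as a preliminary with the proof deferred to reference [2], and your two-direction argument (Clairaut's theorem for necessity; explicit construction of the potential together with path-independence of $\int M\,\mathrm{d}x + N\,\mathrm{d}y$ guaranteed by simple-connectedness for sufficiency) is precisely what that source supplies. One small point: your necessity step correctly requires continuity of $M_y$ and $N_x$ to invoke Clairaut, whereas the hypothesis as printed in the paper lists $M_x$ and $N_y$; this is evidently a transcription slip in the paper, and you are right to read the hypothesis the way you did.
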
 
Next, we turn to harmonic functions. A twice continuously differentiable function $u$ defined on a subset of $\mathbb{R}^n$ is harmonic if it solves the Laplace's equation: \[\Delta u = 0\] 
where $\Delta = \mathcal{D}_1^2 + \mathcal{D}_2^2 +\cdots +\mathcal{D}_n^2 $ and $\Delta_j^2$ denotes the second partial derivative with respect to the $j$th coordinate variable [1]

In this paper, we restricted our functions (domains) to the $xy-$plane. Consequently, the Laplace equation in two variables is stated as \[\prt{^2u}{x^2} + \prt{^2u}{y^2} = 0\]
and therefore, any real-valued function of two real variables $x$ and $y$ that has continuous first and second order partial derivatives in a domain $\mathfrak{D}$ and satisfies Laplace's equation is said to be harmonic in $\mathfrak{D}$.[5]\par\smallskip  
Inasmuch as we try to avoid the word analyticity, the Cauchy-Riemann equations could not be avoided in the work, and so we state it here as a necessary condition for analyticity.
\begin{theorem}[2.2]{\bf [5]}{ \it \quad 
Suppose $f(z) = u(x,y) + iv(x,y)$ is differentiable at a point $z = x + iy.$ Then, at the point $z$, the first order partial derivatives of $u$ and $v$ exists and satisfy the Cauchy-Riemann equations:}
\[\prt{u}{x} = \prt{v}{y}\qquad \mbox{ and }\qquad \prt{u}{y} = -\prt{v}{x} \tag{2.3}\]\qed
\end{theorem}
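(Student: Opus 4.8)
The plan is to exploit the defining feature of complex differentiability: the difference quotient
\[
f'(z) = \lim_{\Delta z \to 0} \frac{f(z+\Delta z) - f(z)}{\Delta z}
\]
must converge to one and the same value regardless of the direction along which $\Delta z$ approaches $0$. First I would compute this limit along the horizontal direction by taking $\Delta z = \Delta x$ real (so $\Delta y = 0$). Writing $f = u + iv$ and separating real and imaginary parts, the quotient becomes
\[
\frac{u(x+\Delta x, y) - u(x,y)}{\Delta x} + i\,\frac{v(x+\Delta x,y) - v(x,y)}{\Delta x}.
\]
Since the full limit exists by hypothesis, each real quotient must converge as $\Delta x \to 0$; this simultaneously secures the existence of $\prt{u}{x}$ and $\prt{v}{x}$ at $z$ and gives $f'(z) = \prt{u}{x} + i\,\prt{v}{x}$.

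Next I would repeat the computation along the vertical direction, taking $\Delta z = i\,\Delta y$ purely imaginary (so $\Delta x = 0$). Using $1/i = -i$, the quotient reads
\[
-i\left[\frac{u(x,y+\Delta y) - u(x,y)}{\Delta y} + i\,\frac{v(x,y+\Delta y) - v(x,y)}{\Delta y}\right],
\]
which in the limit establishes the existence of $\prt{u}{y}$ and $\prt{v}{y}$ and yields $f'(z) = \prt{v}{y} - i\,\prt{u}{y}$. Equating the two expressions for the single number $f'(z)$ and matching real against imaginary parts then produces $\prt{u}{x} = \prt{v}{y}$ together with $\prt{u}{y} = -\prt{v}{x}$, which are precisely the Cauchy--Riemann equations (2.3).

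The only delicate point---and the main obstacle---is justifying that one complex limit controls the two real limits that appear. Concretely, one must argue that convergence of the complex difference quotient forces componentwise convergence of its real and imaginary parts, and that restricting $\Delta z$ to a line is a legitimate sub-approach inheriting the same limiting value. Both facts rest on the standard equivalence between convergence in $\mathbb{C}$ and coordinatewise convergence in $\mathbb{R}^2$; once this is invoked, the existence of the four partial derivatives and the two identities follow immediately from the two line computations above.
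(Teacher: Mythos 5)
The paper states Theorem 2.2 as a preliminary result quoted from reference [5] and gives no proof of its own (note the \(\qed\) placed immediately after the statement), so there is nothing internal to compare your argument against. Your proposal is the standard and correct derivation: evaluating the complex difference quotient along the horizontal and vertical directions, using that convergence in \(\mathbb{C}\) is equivalent to coordinatewise convergence in \(\mathbb{R}^2\) to extract the four partial derivatives, and equating the two resulting expressions for \(f'(z)\) to obtain \(u_x = v_y\) and \(u_y = -v_x\). This is precisely the proof one finds in the cited textbook, so your attempt fills the gap the paper deliberately leaves to the references.
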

\section*{3\quad Main work} 
 Here, we look more critically at the relationship between exact differential equations and harmonic functions with the corresponding harmonic  \mbox{conjugate}.
\begin{theorem}[3.1]{\it
Let $M\mathrm{d}x + N\mathrm{d}y = 0$ be an exact \de\ in (2.1) with harmonic solution $f(x,y) = c$, where $M, N, f$ are at least once  differentiable, then  the harmonic conjugate of f is given as: \[g(x, y) = \int M\partial y - \int\left[ N + \frac{\partial}{\partial x}\int M\partial y \right] \mathrm{d}x + c\]}
\end{theorem}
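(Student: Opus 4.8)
The plan is to build $g$ directly from the Cauchy--Riemann equations, invoking the harmonicity of $f$ only at the very end as the consistency condition that makes the construction well-defined. Since $M\,\mathrm{d}x + N\,\mathrm{d}y = 0$ is exact, Theorem (2.1) furnishes a function $f$ with $f_x = M$ and $f_y = N$. If $g$ is to be a harmonic conjugate of $f$, then by the Cauchy--Riemann equations (2.3) it must satisfy $g_y = f_x = M$ and $g_x = -f_y = -N$. I would take these two first-order relations as the defining equations for $g$ and solve them in turn.

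First I would integrate $g_y = M$ partially with respect to $y$, obtaining
\[ g(x,y) = \int M\,\partial y + h(x), \]
where $h$ is at this stage an arbitrary function of $x$ alone, playing the role of the ``constant'' of $y$-integration. Differentiating with respect to $x$ and imposing the remaining relation $g_x = -N$ then forces
\[ h'(x) = -N - \frac{\partial}{\partial x}\int M\,\partial y. \]
Integrating this in $x$ and substituting back produces exactly the claimed expression for $g$, with the additive constant $c$ arising from this last integration.

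The main obstacle---and the only point at which harmonicity is genuinely used---is to confirm that the right-hand side of the formula for $h'(x)$ depends on $x$ alone, so that the final $x$-integration is legitimate. I would verify this by differentiating that right-hand side with respect to $y$. Since mixed partials commute, $\frac{\partial}{\partial y}\frac{\partial}{\partial x}\int M\,\partial y = \frac{\partial}{\partial x}M = M_x$, so the $y$-derivative in question equals $-N_y - M_x$. But $M = f_x$ and $N = f_y$, whence $-N_y - M_x = -(f_{xx} + f_{yy}) = -\Delta f$, which vanishes precisely because $f$ is harmonic. Thus the harmonicity of $f$ is exactly the condition ensuring that $h$ is a well-defined function of $x$; with this checked, the construction closes and delivers the stated formula.
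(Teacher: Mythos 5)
Your proposal is correct, and it reaches the stated formula by a genuinely more direct route than the paper. The paper also works from the Cauchy--Riemann equations, but it first writes out $f$ explicitly as $\int M\,\partial x + \int\left[N - \frac{\partial}{\partial y}\int M\,\partial x\right]\mathrm{d}y$, computes $g_y$ by differentiating that whole expression with respect to $x$ (arriving, after several interchanges of integrals and derivatives, at $g_y = \frac{\partial}{\partial x}\int N\,\partial y$), integrates that in $y$, pins down the function of integration from $g_x = -f_y$, and only then converts the resulting formula --- which at that stage is expressed entirely in terms of $N$ --- back into the stated form in terms of $M$ via the substitution $\frac{\partial}{\partial x}\int N\,\partial y = M - c_1'(x)$. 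You bypass this entire detour by using $g_y = f_x = M$ at the outset, which yields the stated expression in a few lines; both arguments rest on the same ingredients ($f_x = M$, $f_y = N$, and the Cauchy--Riemann equations), but yours is the minimal derivation. A further point in your favour is that you place the appeal to harmonicity exactly where it is logically needed, namely in verifying that $-N - \frac{\partial}{\partial x}\int M\,\partial y$ is independent of $y$ so that $h$ is a well-defined function of $x$; the paper carries out the same verification (its equation (3.5)) but only as a remark appended after the proof has formally ended, so the proof as printed never visibly invokes the hypothesis that $f$ is harmonic. What the paper's longer route buys is the intermediate dual expression (3.4) for $g$ in terms of $N$ alone, but since that expression is not used subsequently, nothing is lost by your shortcut.
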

\begin{proof}
Firstly, we observe that since $f(x,y)$ is the solution of the exact \de, then \[\frac{\partial f}{\partial x} = M \Rightarrow f= \int M\partial x + g(y)\] differentiating with respect to $y$, keeping $x$ constant  \begin{align*}
\frac{\partial f}{\partial y}&  = \frac{\partial }{\partial y}\int M\partial x + g'(y) = N\\
& \Rightarrow g'(y) = N - \frac{\partial }{\partial y}\int M\partial x\\
\therefore f(x,y) = \int M\partial x &+ \int\left[N-\frac{\partial }{\partial y}\int M\partial x\right] \mathrm{d}x = c \tag{3.1}
\end{align*} 
\end{proof}
Next, we proceed to find the harmonic conjugate $g(x,y)$ of $f(x, y)$. Since the conjugate harmonic function $g$ must satisfy the Cauchy-Riemann equations: \[u_x = v_y \quad \mbox{ and } \quad u_y = -v_x\]
in this case, \quad 
\newcommand{\dst}{\displaystyle}
$\dst\frac{\partial f}{\partial x} = \frac{\partial g}{\partial y}$\quad and\quad $\dst\frac{\partial f}{\partial y} = -\frac{\partial g}{\partial x}$.\par\smallskip 
We must have: 
\begin{align*}
\frac{\partial g}{\partial y} & = \frac{\partial}{\partial x}\left[\int M\partial x + \int\left[N - \frac{\partial }{\partial y}\int M\partial x\right] \mathrm{d}y\right]\\
& = \frac{\partial }{\partial x}\int M\partial x + \frac{\partial }{\partial x}\int N\mathrm{d}y - \frac{\partial}{\partial x}\int \frac{\partial }{\partial y}\int M\partial x \mathrm{d}y\\
& = M + \frac{\partial }{\partial x} \int N\mathrm{d}y - \frac{\partial }{\partial y}\int\left(\frac{\partial}{\partial x}\int M\partial x\right) \mathrm{d}y\\
& = M + \frac{\partial}{\partial x}\int N\mathrm{d}y - \frac{\partial}{\partial y}\int M\partial y\\
& = M + \frac{\partial }{\partial x}\int N\mathrm{d}y - M = \frac{\partial }{\partial x}\int N\partial y 
\end{align*}
Partial integration with respect to $y$ gives: \[ g(x,y)  = \int\left( \frac{\partial }{\partial x}\int N\partial y\right) \partial y + h(x) \tag{3.2}
\]
Differentiating the above with respect to $x$ partially gives: \[ 
\prt{g}{x}= \prt{}{x}\int \prt{}{x}\int N\partial y \partial y + h'(x)\]
But by the Cauchy-Riemann equations, $\dst\prt{g}{x} = -\prt{f}{y}$, and so we have: \par\smallskip 
\begin{align*}
\prt{}{x}\int \prt{}{x} \int N\partial y \partial y + h' (x)\\ 
& = -\prt{}{y}\int M\partial x - \prt{}{y}\int N\partial y + \prt{}{y}\int \prt{}{y}\int M\partial x \partial y\\
& = - \prt{}{y}\int M\partial x - N + \prt{}{y}\int \left( \prt{}{y}\int M\partial y\right) \partial x \\
& = - \prt{}{y}\int M\partial x - N + \prt{}{y}\int M\partial x = -N
\end{align*}
that is: \[ h'(x) = -N -\prt{}{x}\int \prt{}{x}\int N \partial y \partial y\]
integrating both sides with respect to $x$:
\[h(x) = -\int\left[ N + \prt{}{x}\int\prt{}{x} \int N\partial y \partial y \right] \tag{3.3}\]
Substituting the above into {(3.2)}:
\[g(x,y) = \int \prt{}{x}\int N\partial y\partial y - \int\left[ N + \prt{}{x}\int\prt{}{x}\int N \partial y \partial y\right] \mathrm{d}x + c. \tag{3.4}\]
But we know that $f$ is the solution of the \de\ $M\mathrm{d}x + N\mathrm{d}y = 0$. This means that $\dst\prt{f}{y} = N \Rightarrow f(x,y) = \int N\partial y + c_1(x)$,\par\smallskip  
taking the partial derivative with respect to $x$: 
\begin{align*}
\prt{f}{x} &= M(x, y) = \prt{}{x}\int N\partial y + c_1'(x)\\
& \Rightarrow \prt{}{x}\int N\partial y = M - c_1'(x)
\end{align*}
substituting this into (3.4), we have: 
\begin{align*}
g(x, y) & = \int (M - c_1'(x))\partial y - \int \left[ N +  \prt{}{x}\int (M- c_1'(x))\partial y\right] \mathrm{d}x + c\\
& = \int M\partial y - c_1'(x)y - \int\left[N + \prt{}{x}\int M\partial y - \prt{}{x}\int c_1'(x)\partial y\right] \mathrm{d}x + c\\
& = \int M\partial y - c_1'(x)y - \int \left[N + \prt{}{x}\int M\partial y - c_1''(x)y \right]\mathrm{d}x + c\\
& = \int M \partial y - c_1'(x)y - \int \left[N + \prt{}{x}\int M \partial y \right]\mathrm{d}x + \int c_1''(x)y\ \mathrm{d}x + c \\
& = \int M\partial y - c_1'(x)y - \int \left[ N + \prt{}{x}\int M \partial y \right] \mathrm{d}x + c_1'(x)y + c \\
& = \int M\partial y - \int \left[ N + \prt{}{x}\int M\partial y\right] \mathrm{d}x + c 
\end{align*}
This concludes the proof of Theorem 3.1. \hfill\qed \par\smallskip 
We show here that the quantity in the integral sign : $\dst N + \prt{}{x}\int M\partial y$\ is independent of $y$. That is: \par\smallskip 
$\dst\prt{}{y}\left[ N + \prt{}{x} \int M\partial y \right]$ should be zero.
\begin{align*}
\prt{}{y}\left[ N + \prt{}{x} \int M\partial y \right] & = \prt{N}{y} + \prt{}{y}\prt{}{x} \int M \partial y\\
& = \prt{N}{y} + \prt{}{x}\prt{}{y}\int M\partial y\\
& = \prt{N}{y} + \prt{M}{x} = 0
\end{align*}
since $f(x, y)$ is harmonic. The harmonicity of the solution of the \de\ guarantees that \[ \prt{}{y}\left[ N + \prt{}{x} \int M \partial y \right] =0. \tag{3.5}\]
From the above, we have been able to obtain the harmonic conjugate of a harmonic function in terms of integrals related to an exact \de. \par\smallskip 
The existence and characterization of such exact \de s admitting harmonic solutions form the crux of this work. Just before that, we approach the problem: given a harmonic function $u(x,y)$, can we obtain a corresponding exact \de\ for which $u(x,y)$ is a solution? \par 
The answer to this question is a resounding Yes! In essence, we claim that every harmonic function generates an exact ordinary \de.
\begin{lemma}[3.1]
{\it Let $u(x, y)$ be a harmonic function, then $u(x,y)$ solves the exact\par \mbox{equation}: \[M\mathrm{d}x + N\mathrm{d}y = 0 \tag{3.6}\] 
where $M(x, y) = u_x(x, y)$ and $N(x, y) = u_y(x,y)$.
}
\end{lemma}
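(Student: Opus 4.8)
The plan is to verify the two assertions contained in the statement separately: first that the equation $M\,\mathrm{d}x + N\,\mathrm{d}y = 0$ with $M = u_x$ and $N = u_y$ is exact, and second that $u(x,y) = c$ is actually a solution of it. For the exactness claim the natural tool is Theorem 2.1, which reduces the question to the single identity $M_y = N_x$. For the solution claim I would simply identify $u$ itself as the potential function that Theorem 2.1 guarantees.

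To establish exactness I would compute the two mixed partials directly. With $M = u_x$ and $N = u_y$ one has $M_y = u_{xy}$ and $N_x = u_{yx}$. Since $u$ is harmonic it is (at least) twice continuously differentiable, so the classical theorem on equality of mixed partial derivatives applies and gives $u_{xy} = u_{yx}$, whence $M_y = N_x$ throughout the region. Theorem 2.1 then yields that (3.6) is exact and, moreover, produces a function whose first-order partials are $M$ and $N$.

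To identify the solution I would observe that $u$ is precisely such a potential function: by construction $u_x = M$ and $u_y = N$, so $u$ plays the role of $f$ in Theorem 2.1 and its level sets $u(x,y) = c$ solve (3.6). Equivalently, differentiating the relation $u(x,y) = c$ along a level curve gives $u_x\,\mathrm{d}x + u_y\,\mathrm{d}y = \mathrm{d}u = 0$, which is exactly $M\,\mathrm{d}x + N\,\mathrm{d}y = 0$. This closes the argument.

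I do not expect a genuine obstacle here; the whole lemma is a direct verification. The one point worth flagging is that harmonicity is stronger than what the conclusion actually uses: exactness follows already from $u \in C^2$ via the equality of mixed partials, and the Laplace equation $u_{xx} + u_{yy} = 0$ is never invoked in the proof. Since harmonic functions are in fact smooth, the regularity needed to swap the order of differentiation is automatic. The full force of harmonicity only becomes essential elsewhere — for instance in guaranteeing that the bracketed integrand $N + \prt{}{x}\int M\partial y$ is independent of $y$, as recorded in (3.5) — whereas for the present statement it serves merely to legitimise the interchange $u_{xy} = u_{yx}$.
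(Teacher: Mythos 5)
Your argument is correct and follows essentially the same route as the paper, whose proof is a one-line appeal to the fact that (3.6) is the total differential of $u$ together with the differentiability guaranteed by harmonicity; you merely fill in the details (the mixed-partials check $u_{xy}=u_{yx}$ for exactness and the identification of $u$ as the potential function). Your closing remark that only $u\in C^2$ is really needed matches the paper's own parenthetical observation that every once-differentiable function with continuous derivatives has this property.
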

\begin{proof}
The proof of the above lemma relies on the fact that (12) is the total \de\ of $u(x,y)$. Since $u(x,y)$ is harmonic, we are guaranteed of its differentiablility. \hfill\ $\qed$
\end{proof}
In Theorem 3.1, we have shown that if the exact \de\ \[M\mathrm{d}x + N\mathrm{d}y = 0\] has a harmonic solution $u(x,y)$, then the conjugate $v(x,y)$ of $u(x,y)$ is given as: \[v(x,y) = \int M \partial y - \int\left[ N + \prt{}{x}\int M \partial y\right] \mathrm{d}x + c.\]
Now, comparing with (3.6) above this corresponds to: \[v(x,y) = \int u_x\partial y - \int \left[ u_y + \prt{}{x}\int u_x \partial y\right] \mathrm{d}x + c. \tag{3.7}\]
We ask the question, which exact \de\ does the harmonic conjugate $v(x,y)$ satisfy? From Lemma 3.1, we know that $v(x,y)$ solves: 
\[v_x(x,y)\mathrm{d}x + v_y(x,y)\mathrm{d}y = 0\]
we attempt to obtain the above \de\ in terms of the given function $u(x,y)$, so from (3.7), we have:
\begin{align*}
\prt{}{x} \int u_x\partial y -  \prt{}{x}\int\left[u_y + \prt{}{x}\int u_x\partial y\right] \mathrm{d}x & = v_x(x,y) \tag{3.8}\\
\prt{}{y} \int u_x\partial y -  \prt{}{y}\int\left[u_y + \prt{}{x}\int u_x\partial y\right] \mathrm{d}x = v_y(x,y) \tag{3.9}
\end{align*} 
From (3.8), 
\begin{align*}
v_x(x,y) & = \prt{}{x}\int u_x\partial y - \prt{}{x}\int\left[u_y + \prt{}{x}\int u_x\partial y \right] \mathrm{d}x\\
& = \prt{}{x}\int u_x\partial y - \prt{}{x}\int u_y\mathrm{d}x - \prt{}{x}\int\prt{}{x}\int u_x\partial y \mathrm{d}x\\
& = \prt{}{x}\int u_x\partial y - u_y - \prt{}{x}\int \left( \prt{}{x}\int u_x\partial x\right) \partial y\\
& = \prt{}{x}\int u_x\partial y - u_y - \prt{}{x}\int u_x \partial y = -u_y
\end{align*}
Also, from (3.9):
\begin{align*}
v_y(x,y) &= \prt{}{y}\int u_x\partial y - \prt{}{y}\int\left[ u_y + \prt{}{x}\int u_x\partial y\right] \mathrm{d}x
\end{align*}
but we have shown in (3.5) that the quantity under the integral sign in the last term is independent of $y$ and consequently, the entire last term above vanishes. This is because $u(x,y)$ is harmonic.\par\smallskip  Therefore;
\[v_y(x,y) = \prt{}{y}\int u_x \partial y = \prt{}{y}\int M \partial y = M\]
In all, we have: \[v_x = -u_y = -N;\quad v_y = u_x = M\]
Thus we have the \de: \[ -u_y\mathrm{d}x + u_x\mathrm{d}y = 0\]
corresponding to \[-N\mathrm{d}x + M\mathrm{d}y = 0\tag{3.10}\]
The equation is exact since \[ \prt{(-N)}{y} = \prt{(-u_y)}{y} -u_{yy} \]
and \[ \prt{(M)}{x} = \prt{u_x}{x} = u_{xx}\]
\[\mbox{  implies }\quad \prt{(-N)}{y} = \prt{M}{x} \Leftrightarrow -u_{yy} = u_{xx}\Leftrightarrow u_{xx}  + u_{yy} = 0  \hspace*{4.5cm} \]
We have now formulated a relationship between harmonic functions and their exact ordinary \de s. So far given a harmonic function $u(x,y)$, we can construct the exact \de\ for which $u(x,y)$ is a solution and every harmonic function has this property (in fact, every function which is at least once differentiable with continuous derivatives has this property).\par\smallskip  
Now, since every harmonic function $u(x,y)$ has  a harmonic conjugate, $v(x,y)$, we have been able to construct $v(x,y)$ in terms of integrals involving $u(x,y)$ (or equivalently the terms in the \de\ associated with $u(x,y)$). We have also obtained an exact conjugate \de\ associated with $u(x,y)$ which is simply the exact \de\ (3.10) for which $v(x,y)$ is a solution.\par\smallskip 
All these have been achieved from the standpoint of $u(x,y)$, it remains to obtain a class of exact \de s that have harmonic solutions. 
\begin{theorem}[2.]
{\it Let $M(x,y)\mathrm{d}x + N(x,y)\mathrm{d}y = 0$\ be a given exact \de. Then, the equation admits a harmonic solution if and only if \[M_x + N_y = 0.\]}
\end{theorem}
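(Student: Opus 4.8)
The plan is to exploit the fact that, for an exact \de, the ``solution'' is exactly the potential function supplied by Theorem 2.1, whose Laplacian can be read off directly from $M$ and $N$; the claimed equivalence then drops out with almost no computation.

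First I would invoke Theorem 2.1. Since $M\mathrm{d}x + N\mathrm{d}y = 0$ is assumed exact, there is a function $f$ with $f_x = M$ and $f_y = N$, and the one--parameter family of solutions is $f(x,y) = c$. Hence the phrase ``the equation admits a harmonic solution'' means precisely that this potential $f$ satisfies Laplace's equation $\prt{^2 f}{x^2} + \prt{^2 f}{y^2} = 0$.

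Next I would differentiate the two first--order relations once more. Differentiating $f_x = M$ with respect to $x$ gives $f_{xx} = M_x$, and differentiating $f_y = N$ with respect to $y$ gives $f_{yy} = N_y$. Adding these yields \[ \Delta f = f_{xx} + f_{yy} = M_x + N_y. \] The characterization is then immediate: $f$ is harmonic $\Leftrightarrow \Delta f = 0 \Leftrightarrow M_x + N_y = 0$, which supplies both the necessity and the sufficiency asserted. One may also note that exactness itself ($M_y = N_x$) is used only to guarantee the existence of $f$; it plays no role in the harmonicity condition, so the two hypotheses act independently.

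The only point demanding any care — and hence the ``main obstacle'', such as it is — is the regularity needed to form $f_{xx}$, $f_{yy}$ and $\Delta f$ at all, and to ensure $f$ is genuinely twice continuously differentiable so that calling it harmonic is meaningful. This is underwritten by the continuity hypotheses on $M,\ N,\ M_x,\ N_y$ in Theorem 2.1 together with the twice--continuous--differentiability built into the definition of a harmonic function. Notably, neither the Cauchy--Riemann equations nor the explicit conjugate formula of Theorem 3.1 is required here: the equivalence follows from exactness plus a single pair of differentiations alone.
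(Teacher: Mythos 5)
Your proposal is correct and follows essentially the same route as the paper: both identify the solution with the potential $f$ satisfying $f_x = M$, $f_y = N$ from exactness, differentiate once more to get $f_{xx} + f_{yy} = M_x + N_y$, and read off the equivalence in both directions. Your added remarks on the regularity needed to form $\Delta f$ and on the independence of the exactness hypothesis are reasonable refinements but do not change the argument.
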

\begin{proof}
The \de\ $M\mathrm{d}x + N\mathrm{d}y = 0$ is exact means that \[M_y = N_x.\]
We first show that if $M_x + N_y = 0$, then the equation has a harmonic solution. The solution of the \de\ is given in equation (1) as \[f(x,y) = \int M\partial x + \int\left[ N- \prt{}{y}\int M\partial x\right] \mathrm{d}y = c\]
we shall show that $f$ is harmonic.\par 
$f_x = M$ and $f_y = N$ and so $f_{xx}=M_x,\quad f_{yy} = N_y$.\par
$f_{xx} + f_{yy} = M_x + N_y = 0,$ hence $f(x,y)$ is harmonic.\par\smallskip Conversely, we assume that the \de\ has a harmonic solution $f(x,y)$ given as in (1) above. Since $f$ is harmonic, it  follows that \[f_{xx} + f_{yy} = 0\quad \Leftrightarrow \quad M_x + N_y = 0\]
This concludes the proof of the necessary and sufficient condition for the exact \de\ to have a harmonic solution.\hfill \qed
\end{proof}\vfill\newpage
We therefore have the diagram: 
\begin{center}
\begin{tikzpicture}
\usetikzlibrary{arrows}
\tikzset{nd/.style={node distance = 8.9cm}}
\node (uxy) {$u(x,y)$};
\node [right of = uxy, nd] (vxy) {$v(x,y)$};
\node [above of = uxy, node distance = 2cm] (le){Laplace's Equation} node [above of = le, node distance=3cm] (ab){$u_{xx}+u_{yy} = 0$};
\node[below of = uxy, node distance=2cm] (ede){Exact DE} node [below of = ede, node distance=3cm] (udxy){$u_x\mathrm{d}x + u_y\mathrm{d}y = 0$} node [below of = udxy, node distance=0.7cm](nmx){$M\mathrm{d}x + N\mathrm{d}y = 0$}; 
\draw [->] (uxy) -- (le);
\draw[->] (le) -- (ab);
\draw[->] (uxy) -- (ede);
\draw[->](ede) -- (udxy);
\draw[->, thick] (udxy) [bend left = 60] to  node[sloped,rotate=180, above] {\footnotesize if $M_x + N_y = 0$} (uxy);
\node[above of = vxy, node distance=2cm](le2){Laplace's Equation} node[above of = le2, node distance=3cm](bd){$v_{xx} + v_{yy} = 0$};
\node[below of = vxy, node distance=2cm](ede2){Exact DE};
\node[below of = ede2, node distance=3cm] (udyx) {$-u_y\mathrm{d}x + u_x\mathrm{d}y = 0$};
\node[below of = ede2, node distance=3.7cm] (nmy) {$-N\mathrm{d}x + M\mathrm{d}y = 0$};
\draw[->] (vxy) -- (le2);
\draw[->](le2) -- (bd);
\draw[->](vxy) -- (ede2);
\draw[->] (ede2) -- (udyx);
\draw[->, thick] (udyx) [bend right = 60] to node[sloped, above,  rotate=180 ]{\footnotesize If  $(-N)_x + M_y = 0$} (vxy);
\draw[stealth-stealth] (uxy) --node[above]{harmonic conjugates} node [below] {$v=\int u_x\partial y - \int [u_y + \frac{\partial}{\partial x}\int u_x\partial y]\mathrm{d}x + c$} (vxy);
\draw[stealth-stealth, thick] (udxy) -- (udyx);
\draw[stealth-stealth] (nmy) --node[above]{\footnotesize Exact conjugates} (nmx); 
\end{tikzpicture}
\end{center}
\blfootnote{DE = Differential Equations}
\vfill\newpage 
\section*{4 \quad Orthogonal Trajectories}
Suppose that we have a family of curves given by $f(x,y,c_1) = 0$, where one curve corresponds to each $c_1$ in some range of values of the parameter $c_1$. In certain applications, it is found desirable to know what curves have the property of intersecting a curve of the family $f(x,y,c_1)$ at right angles whenever they do intersect.\par\smallskip 
By definition, when all the curves of a family of curves $f(x,y,c_1) =0$ intersects orthogonally all the curves of another family $g(x,y,c_2) = 0$ then the families are said to be orthogonal trajectories of each other.[3]\par\smallskip 
The method of finding orthogonal trajectories of given curves relies on the fact that if two curves are to be orthogonal, then at each point of intersection, the slopes of the curves must be negative reciprocals of each other. In this section, we attempt to investigate the orthogonal trajectories of harmonic functions.\par\smallskip 
Let $u(x,y,c) = 0$ be any twice differentiable function with at most one arbitrary constant $c$. We can obtain the exact \de\ associated with \\ $u(x,y,c) = 0$ by taking the total derivative: 
\[M\mathrm{d}x + N\mathrm{d}y = 0\tag{4.1}\]
with $M = u_x$ \ and \ $N = u_y$.
This corresponds to \[ \frac{\mathrm{d}y}{\mathrm{d}x} = \frac{-M}{N} = \frac{-u_x}{u_y}\]
with the arbitrary constant eliminated by the appropriate substitution. \par 
The \de\ of the orthogonal trajectory is \[ \frac{\mathrm{d}y}{\mathrm{d}x} = \frac{u_y}{u_x} = \frac{N}{M} \]
which can be written as $u_x\mathrm{d}y = u_y\mathrm{d}x$ or \[ - u_y\mathrm{d}x + u_x\mathrm{d}y = 0. \tag{4.2}\]
If $u(x,y,c_1)$ is harmonic, then (4.2) will be exact as we would have: \begin{align*}
(-u_y)_y = (u_x)_x & \Leftrightarrow -u_{yy} = u_{xx}\\
& \Leftrightarrow u_{xx} + u_{yy} = 0
\end{align*}
Furthermore, it is recognized that (4.2) is the exact conjugate of (4.1) as given in (3.10) and thus has its solution given by (3.7) as \[v(x,y,c_1) = \int u_x\partial y - \int \left[ u_y + \prt{}{x} \int u_x \partial y\right] \mathrm{d}x + c_1\]
which has been shown to be harmonic conjugate of $u(x,y,c)$. This means that the orthogonal trajectories of a harmonic function are its harmonic conjugates. \par\smallskip 
On the other hand, if $u(x,y,c)$ is not harmonic, then (4.2) will obviously not be exact, but can then be solved by other means to obtain the orthogonal trajectories.
The above results can be summarized in the theorem below:\par\smallskip 
{\bf Theorem 4.1}
\begin{enumerate}[(a)] 
\setlength{\itemsep}{0pt}
\item {\it Orthogonal trajectories of harmonic functions are necessarily harmonic.}
\item {\it Orthogonal trajectories of harmonic functions are their harmonic conjugates.}
\item \it {Any two functions that satisfy the Cauchy-Riemann equations and has non-vanishing first partial derivatives are orthogonal trajectories of each other.}
\end{enumerate}
\begin{proof}
Parts (a) and (b) have already been achieved. For (c), let $u = u(x,y),\ \ v = v(x,y)$ satisfy the Cauchy-Riemann equations; $u_x = v_y$ \ and \ $u_y = -v_x$, from the derivatives of implicit functions. we have that: \[ 
\bigg(\frac{\mathrm{d}y}{\mathrm{d}x}\bigg)_u = \frac{-u_x}{u_y} \quad \mbox{ and }\quad  \left(\frac{\mathrm{d}y}{\mathrm{d}x}\right)_v = \frac{-v_x}{v_y} \tag{4.3} \]
respectively, for $u$ and $v$. If $u,\ v$ are orthogonal trajectories of each other, then the product of the slopes in (4.3) above will be negative unity.\par\smallskip 
Assume for a contradiction that this is not the case, let \[
\left( \frac{\mathrm{d}y}{\mathrm{d}x}\right)_u\left(\frac{\mathrm{d}y}{\mathrm{d}x}\right)_v = k(x,y)\]
that is: \begin{align*}
\left(\frac{-u_x}{u_y}\right)\left(\frac{-v_x}{v_y}\right) &= \frac{u_xv_x}{u_yv_y} = k(x,y)\\
\Rightarrow u_xv_x &= k(x,y)u_yv_y\\ 
\mbox{ or }\quad \ u_xv_x &- k(x,y)u_yv_y = 0
\end{align*}
Let $v_x = -u_y$ and $v_y = u_x$ \qquad {\it (from the Cauchy-Riemann equations)}\\
then: \begin{align*}
u_x(-u_y) - k(x,y)u_y(u_x)& = 0\\
-u_xu_y - k(x,y)u_yu_x &=0\\
-u_xu_y (1 + k(x,y)) &= 0\\ 
\end{align*}
Since $u_x, u_y \neq 0$, it follows that \[ 1 + k(x,y) = 0 \mbox{ \quad or \quad } k(x,y) = -1.\]
Therefore,
\[\left( \frac{\mathrm{d}y}{\mathrm{d}x}\right)_u\left(\frac{\mathrm{d}y}{\mathrm{d}x}\right)_v = -1\tag{4.4}\] This proves that $u$ and $v$ are orthogonal trajectories of each other. \hfill \qed
\end{proof}
The above theorem in other words lends yet another way of looking at harmonic functions and their conjugates. It further provides another way of obtaining harmonic conjugates of harmonic functions, namely by seeking for their orthogonal trajectories. In the same way, we know that any two harmonic conjugates $u$ and $v$ satisfy the C-R equations, now we know that they also satisfy the equation (4.4). Therefore, given a harmonic function $u(x,y)$, to find its conjugate $v(x,y)$ from (4.4) now becomes a problem of solving the partial \de: 
\[u_xv_x + u_yv_y = 0\] 
where $u_x,\ u_y$ are known functions and $v$ is the unknown.
The solution is obtained by solving its characteristic equation: \[\frac{\mathrm{d}x}{u_x} = \frac{\mathrm{d}y}{u_y} = \frac{\mathrm{d}v}{0}\]
Looping the first two gives us the exact \de\ obtained in (3.10) whose solution is $v(x,y)$. Integrating the last term in the above equation yields $v = $ constant.\par 
Hence obtain the same result in (3.7) \[
v(x,y) = \int u_x\partial y - \int \left[ u_y + \prt{}{x}\int u_x \partial y\right] \mathrm{d}x =c\]
\vfill\newpage
\section*{References}
\begin{enumerate}[{[1]}]
\item S. Axter, P. Bourdon, W. Raney, {\it Harmonic Function Theory (Second Edition)} Springer-Verlag New York. Inc (2001).
\item W. E. Boyce, P. C. D. Prima, {\it Elementary Differential Equations and Boundary Value Problems (Seventh Edition)}, John Wiley \& Sons Inc. (2001).
\item E. D. Rainville, P. E. Bedient, {\it Elementary Differential Equations (Seventh Edition)}, Macmillan Publishing Company New York (1989).
\item D. G. Zill, {\it A First Course in Differential Equations with Applications (Second Edition),} PWS Publishers, Boston (1982).
\item D. G. Zill, P. D. Shanahan, {\it A First Course in Complex Analysis with Applications,} Jones and Bartlett Publishers Inc. (2003).
\end{enumerate}
\end{document}